\documentclass[conference]{IEEEtran}
\IEEEoverridecommandlockouts
\usepackage{cite}
\usepackage{amsmath,amssymb,amsfonts,url,bm,enumerate}
\usepackage{amsthm}
\usepackage{graphicx}
\usepackage{textcomp}
\usepackage[dvipsnames]{xcolor}
\usepackage{tikz}
\usetikzlibrary{arrows.meta}
\usepackage{float}
\usepackage{colortbl}
\usepackage{booktabs}
\usepackage{array}
\usepackage{comment}
\usepackage{multicol}
\usepackage{enumitem}
\def\BibTeX{{\rm B\kern-.05em{\sc i\kern-.025em b}\kern-.08em
    T\kern-.1667em\lower.7ex\hbox{E}\kern-.125emX}}
    
\usepackage{hyperref}%%JP
\usepackage{pict2e}%%KE

\usepackage{tikz-cd}
\tikzset{commutative diagrams/.cd}
\tikzcdset{row sep/tiny=0.12 em}

\usetikzlibrary{intersections}

\newlength\myheight

        \usetikzlibrary{calc}

\theoremstyle{plain}
\newtheorem{theorem}{Theorem}%[section]
\newtheorem{lemma}[theorem]{Lemma}
\newtheorem{proposition}[theorem]{Proposition}

\newtheorem{corollary}[theorem]{Corollary}

\theoremstyle{definition}
\newtheorem{definition}[theorem]{Definition}

\newtheorem{remark}[theorem]{Remark}

\newtheorem{example}[theorem]{Example}

\newtheorem{claim}{Claim}[theorem]
 
\newcommand{\setin}[3]{\{#1\in#2\;|\;#3\}}

\newcommand{\Naturals}{{\mathbb N}}

\newcommand{\Reals}{{\mathbb R}}

\newcommand{\auv}[1]{``#1''}

\newcommand{\downset}{\mathop{\downarrow}\!}
\newcommand{\upset}{\mathop{\uparrow}\!}

\newcommand{\conjun}{\mathrel{\wedge}}
\newcommand{\disjun}{\mathrel{\vee}}

\definecolor{darkgreen}{rgb}{0.0, 0.5, 0.0}

\newcommand{\lin}[1]{[#1]}

\newcommand{\tikzperpcircle}{%
    \tikz[baseline=(C.base)] {
        \draw[line width=0.4pt] (0,0) -- (0,0.4ex); % svislá čára
        \draw[line width=0.4pt] (-0.2ex,0) -- (0.2ex,0); % vodorovná čára
        
        \node[draw, circle, inner sep=0.5pt, minimum size=1.1ex] (C) at (0, 0.2ex) {};
    }%
}

\newcommand{\xprime}{^{\tikzperpcircle}}
\newcommand{\xxprime}{^{{\tikzperpcircle}{\tikzperpcircle}}}

\definecolor{brightgreen}{rgb}{0.4, 1.0, 0.0}
\newcolumntype{?}{!{\vrule width 1pt}}

\begin{document}

\title{Quantales carrying ortholattice structure\\
%{\footnotesize \textsuperscript{*}Note: Sub-titles are not captured in Xplore and
%should not be used}
%\thanks{Identify applicable funding agency here. If none, delete this.}
}
\author{
\IEEEauthorblockN{Michal~Botur}
\IEEEauthorblockA{\textit{Department of Algebra and Geometry} \\
	\textit{Faculty of Science, Palack\'y University Olomouc }\\
	Olomouc, Czech Republic\\
	michal.botur@upol.cz}
\and
%\\
\IEEEauthorblockN{David Kruml, Jan~Paseka}
\IEEEauthorblockA{\textit{Department of Mathematics and Statistics} \\
\textit{Faculty of Science, Masaryk University}\\
Brno, Czech Republic\\
kruml,paseka@math.muni.cz}
}

\maketitle

\begin{abstract}
This paper investigates the intersection of residuated structures from many-valued logic and orthomodular lattices from quantum logic. We explore whether non-Boolean structures can simultaneously satisfy residuation principles and orthocomplementation requirements. Our main contribution is a study of Girard posets with inversions, providing a characterization theorem where a unital residuated poset is Girard if and only if it admits an inversion satisfying specific adjointness conditions. We prove that any complemented lattice admitting an integral residuated structure must be Boolean, which motivates our search for orthomodular examples in the non-integral case. We answer this by demonstrating that the lattice $C(\mathbb{R}^n)$ of closed subspaces of $n$-dimensional real coordinate space carries both an orthomodular and a commutative Girard quantale structure. This construction provides a concrete non-Boolean framework unifying quantum-logical and many-valued logical reasoning.
\end{abstract}

\begin{IEEEkeywords}
Residuated lattice, quantale, Girard quantale, orthomodular lattice, inversion, linear negation, many-valued logic, quantum logic, cyclic dualizing element, Hilbert space
\end{IEEEkeywords}

\section{Introduction}
The landscape of non-classical logics has evolved along two distinct yet increasingly intertwined paths: many-valued logics, which emerged  from considerations of vagueness and partial truth, and quantum logics, which arise from the mathematical foundations of quantum mechanics. While Boolean algebras provide the algebraic semantics for classical propositional logic, the quest to understand non-classical reasoning has led to richer algebraic structures---residuated lattices for many-valued logics and orthomodular lattices for quantum logics.

{Multi-valued logic frameworks} traditionally employ residuated structures, where the residuation property captures the algebraic essence of logical implication. Residuated lattices, and their complete variants known as quantales, have proven fundamental in modeling fuzzy logic, intuitionistic logic, and particularly linear logic, where they offer precise semantics for resource-sensitive reasoning. Girard quantales, equipped with a cyclic dualizing element, emerge as the natural algebraic semantics for propositional linear logic, much as Boolean algebras serve classical logic.

{Quantum logic frameworks}, by contrast, center on orthomodular lattices---structures that capture the non-Boolean nature of quantum propositions while preserving the fundamental complementation structure inherent in quantum mechanics. The canonical example is the lattice of closed subspaces of a Hilbert space, where the orthocomplement corresponds to orthogonal complement and the non-distributivity of the lattice reflects the incompatibility of certain quantum observables.

This paper investigates the {intersection of these two algebraic traditions}, seeking structures that simultaneously respect the residuation principles of many-valued logic and the orthocomplementation of quantum logic. Our central motivation is to construct and study residuated orthomodular lattices --- a class of structures sitting at the crossroads of these frameworks.

We develop our analysis through the lens of {Girard posets with inversions}, extending previous work on Girard quantales to the broader setting of residuated posets. A key finding establishes that complemented lattices admit an integral residuated structure if and only if they are Boolean algebras (Theorem~\ref{thm:2}). This naturally raises the question: Do there exist orthomodular structures that are also unital commutative quantales but not Boolean algebras?

We answer this affirmatively by demonstrating that the lattice $C(\mathbb{R}^n)$ of closed subspaces of 
$n$-dimensional real coordinate space constitutes both an orthomodular lattice and a commutative Girard quantale (Theorem~\ref{Rnexample}). A key highlight of this construction is that the orthocomplement from the quantum-logical framework coincides precisely with the linear negation (algebraic inversion) of the Girard structure. This provides a concrete non-Boolean example that unifies quantum-logical and many-valued logical structures, offering new possibilities for modeling systems that exhibit both quantum uncertainty and resource-sensitive reasoning.

This paper assumes familiarity with the fundamental concepts of lattice theory and basic quantum logic. For inversions, 
ortholattices and orthomodular lattices and their properties, we refer readers to the monographs by Birkhoff~\cite{Bi} and Kalmbach~\cite{OML}. Background on quantales  can be found in the works of Rosenthal~\cite{rosenthal1}, Kruml and Paseka~\cite{KrPaseka08}. For fundamentals of the theory of operator algebras we refer to Kadison and Ringrose\cite{KaRi}.

The paper is organized as follows: Section~\ref{Preliminaries} establishes the foundational concepts of orthomodular lattices, residuated posets, and quantales. Section~\ref{Girposinv} develops the theory of Girard posets with inversions, characterizing when residuated structures admit cyclic dualizing elements. Section~\ref{quancarr} presents our main results on quantales carrying ortholattice structure, culminating in the construction of orthomodular Girard commutative quantales on real coordinate spaces. 
Our conclusions follow in Section~\ref{Conclusions}.

\section{Basic concepts}\label{Preliminaries}

\subsection{Inversions, ortholattices and orthomodular lattices}

\begin{definition}\label{def:inversion}
We say that an {\em inversion} $^{\xprime}$ is given on a poset $(P, \leq)$ if a map $(-)^{\xprime}: P \rightarrow P$ exists such that $x \le y \Leftrightarrow x^{\xprime} \ge y^{\xprime}$ and ${x^{\xxprime}} = x$ for every $x, y \in P.$
\end{definition}

Recall that an inversion is an involutive dual order automorphism.

\begin{definition}
\label{OMLatDef}
A meet semi-lattice $(X,\conjun, 1)$ is called an {\em ortholattice} if it
comes equipped with an inversion $(-)^{\perp}\colon X \to X$ satisfying:
\begin{itemize}
  % \item $x^{\perp\perp} = x$;
  % \item $x \leq y$ implies $y^\perp \leq x^\perp$;
   \item $x \conjun x^\perp = 1^\perp$.
\end{itemize}

\noindent One can then define a bottom element as $0 = 1 \conjun
1^{\perp} = 1^\perp$ and join by $x\disjun y = (x^{\perp}\conjun
y^{\perp})^{\perp}$, satisfying $x\disjun x^{\perp} = 1$.

We write $x\perp y$ if and only if $x\leq y^{\perp}$. 

Such an ortholattice is called {\em orthomodular} if it satisfies (one of)
the three equivalent conditions:
\begin{itemize}
\item $x \leq y$ implies $y = x \disjun (x^\perp \conjun y)$;

\item $x \leq y$ implies $x = y \conjun (y^\perp \disjun x)$;

\item $x \leq y$ and $x^{\perp} \conjun y = 0$ implies $x=y$.
\end{itemize}
\end{definition}

Our guiding example is the following.

\begin{example}\label{examHilb}
Let $H$ be a {Hilbert space} defined over the field of real numbers ($\mathbb{R}$) or complex numbers ($\mathbb{C}$). The closed subspace spanned by any subset $S \subseteq H$ is denoted by $\lin S$. We define the set of all closed subspaces as:
$$C(H) = \{ \lin S \mid S \subseteq H \}$$

The structure $C(H)$ is endowed with the operations of an {orthomodular lattice}, where:
\begin{enumerate}
    \item The meet operation $\wedge$ corresponds precisely to the set intersection $\cap$.
    \item The orthocomplement $P^{\perp}$ of any closed subspace $P \in C(H)$ is its {orthogonal complement} within $H$.
\end{enumerate}

Illustrative examples of such Hilbert spaces include:
\begin{itemize}
    \item The $n$-dimensional coordinate spaces $\mathbb{R}^n$ and $\mathbb{C}^n$.
    \item The vector spaces of $n \times n$ matrices, $M_n(\mathbb{R})$ and $M_n(\mathbb{C})$.
\end{itemize}
\end{example}

Let us recall the following elementary result.

\begin{lemma} {\rm\cite[Lemma 3.4]{Jac}}
\label{DownsetLem}
Let $X$ be an orthomodular lattice, with element $a\in X$. 
The (principal) downset $\downset a = \setin{u}{X}{u \leq a}$ is
  again an orthomodular lattice, with order, meets and
  joins as in $X$, but with its own orthocomplement $\perp_a$ given
  by $u^{\perp_a} = a \conjun u^{\perp}$, where $\perp$ is the
  orthocomplement from $X$.
\end{lemma}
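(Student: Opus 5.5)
The plan is to verify the axioms of Definition~\ref{OMLatDef} directly for $\downset a$ with the map $u\mapsto u^{\perp_a}=a\conjun u^{\perp}$. The key tool is orthomodularity of $X$ in its second form: $x\le y$ implies $x=y\conjun(y^\perp\disjun x)$.

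First, $\downset a$ is closed under the meets and joins of $X$, since $u,v\le a$ gives $u\conjun v\le a$ and $u\disjun v\le a$. Its top element is $a$ and its bottom is $0$. The map $\perp_a$ clearly sends $\downset a$ into itself. It is antitone: if $u\le v$, then $v^\perp\le u^\perp$, so $a\conjun v^\perp\le a\conjun u^\perp$.

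The central step, and the only one needing orthomodularity, is involutivity. For $u\le a$ we compute
\[
u^{\perp_a\perp_a}=a\conjun(a\conjun u^\perp)^\perp=a\conjun(a^\perp\disjun u),
\]
using the De Morgan law $(a\conjun u^\perp)^\perp=a^\perp\disjun u$, which follows from the definition of $\disjun$ and $x^{\perp\perp}=x$. Taking $x=u$ and $y=a$ in the second orthomodular condition gives $a\conjun(a^\perp\disjun u)=u$, as required. Involutivity together with antitonicity yields the biconditional $u\le v\Leftrightarrow v^{\perp_a}\le u^{\perp_a}$, so $\perp_a$ is an inversion in the sense of Definition~\ref{def:inversion}.

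Next, the complement law holds: $u\conjun u^{\perp_a}=u\conjun a\conjun u^\perp=0$, and this $0$ equals $a^{\perp_a}=a\conjun a^\perp$. Hence $\downset a$ is an ortholattice. Its join, defined as $(u^{\perp_a}\conjun v^{\perp_a})^{\perp_a}$, is the join in the subposet $\downset a$ because $\perp_a$ is a dual order automorphism. That join coincides with the join in $X$ because the $X$-join of $u,v\le a$ already lies in $\downset a$.

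Finally, orthomodularity of $\downset a$ can be checked with the third condition. Suppose $u\le v\le a$ and $u^{\perp_a}\conjun v=0$. Since $v\le a$, we have $u^{\perp_a}\conjun v=a\conjun u^\perp\conjun v=u^\perp\conjun v$, so $u^\perp\conjun v=0$. Orthomodularity of $X$ then gives $u=v$.

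The only nontrivial point is the involutivity computation. Everything else is routine bookkeeping on closure under the lattice operations.
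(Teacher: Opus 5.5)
Your proof is correct and complete. The paper gives no proof of its own (it cites Jacobs), and your direct verification is the standard argument for this fact: involutivity of $\perp_a$ comes from the orthomodular identity $a\wedge(a^\perp\vee u)=u$ for $u\le a$, and the remaining axioms reduce to $X$ because $v\le a$ gives $a\wedge u^\perp\wedge v=u^\perp\wedge v$.
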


\begin{definition}
   A bounded lattice $(X, \le, \wedge, \vee, 0,1)$
    is {\em complemented} if, for every element $x \in P$, there exists an element $x' \in P$ (the complement of $x$) such that:
    \begin{itemize}
        \item $x \vee x' = 1$,
        \item $x \wedge x' = 0$.
    \end{itemize}
\end{definition}

\begin{remark}\label{boolcompor}\hfill 
\begin{itemize}
    \item A Boolean algebra is a complemented lattice that is also distributive.
    \item An ortholattice is a complemented lattice where the complementation is unique and satisfies additional properties like involution ($x'' = x$) and order-reversal ($x \le y \Rightarrow y' \le x'$).
\end{itemize}
\end{remark}

\begin{definition}[Compatibility]
Two elements $x,y$ of an orthomodular lattice $X$ are \emph{compatible}, denoted $x \leftrightarrow y$, if
\[
x = (x \wedge y) \vee (x \wedge y^\perp).
\]
Equivalently, $a$ and $b$ generate a Boolean sublattice of $X$. A \emph{block} in an orthomodular lattice $X$ is a maximal Boolean sublattice of $X$, i.e., a Boolean sublattice not properly contained in any other Boolean sublattice of $X$.
\end{definition}

\subsection{Residuated posets}

\begin{definition}
\label{respos} A {\em residuated poset} is a structure 
$(P, \le, \odot, \rightarrow,$ $ \leftarrow)$ such that:
\begin{enumerate}
    \item $(P, \le)$ is a poset.
    \item $\odot$ is a binary operation on $P$ (the {\em multiplication}), which is associative.
    \item $\rightarrow$ and $\leftarrow$ are binary operations on $P$, called the {\em right residuum} and the {\em left residuum}, respectively.
    \item The triple $(\odot, \rightarrow, \leftarrow)$ satisfies the following {\em adjointness condition} for all $x, y, z \in P$:
    
        $$\begin{array}{c l}
        x \odot y \le z &\quad \text{if and only if} \quad x \le y \rightarrow z \\
        &\quad \text{if and only if} \quad y \le z \leftarrow x.
        \end{array}
        $$
  
\end{enumerate}
A residuated poset $(P, \le, \odot, \rightarrow, \leftarrow)$ is: 
\begin{enumerate}\item {\em commutative} if its semigroup is abelian,
\item {\em idempotent} if its semigroup is idempotent,
\item {\em unital} if its semigroup is a monoid with a unit $e$, 
\item {\em integral} if its semigroup is a monoid with a unit $1$, where 
$1$ is the greatest element of 
the poset $(P, \le)$.
\item {\em a residuated lattice} if $\leq$ is a lattice order.
\end{enumerate}
\end{definition}

\begin{remark}\label{remint}\hfill 

\begin{enumerate}
\item If the operation $\odot$ is commutative ($x \odot y = y \odot x$), then the right and left residua coincide ($\rightarrow$ is the same as $\leftarrow$), and the structure simplifies to a commutative residuated poset.
\item Multiplication preserves existing joins in both arguments and as right adjoints, $y \rightarrow (-)$ 
and $(-) \leftarrow x$ preserve existing meets.
\item For an integral residuated poset $P$ and for every $x, y\in P$ we obtain
$
x\odot y\le x, y \quad (x\odot y  \le x\odot 1 = x \text{ and similarly } x\odot y \le 1\odot y = y).
$
\item Every Boolean algebra is an integral commutative 
residuated lattice where $\odot=\wedge$ and the residuum operation is defined by a stipulation
$$
x\rightarrow y=x^{\perp}\vee y.
$$
\end{enumerate}
\end{remark}

An important subclass of residuated posets are Girard posets.

\begin{definition}[Girard posets] \cite{KrPaseka19} 
\label{def:Girpos}
An element $d$ of a residuated poset $(P, \le, \odot, \rightarrow, \leftarrow)$ is called \emph{dualizing} if for all $x \in P$,
\[
d\leftarrow (x \rightarrow d)  = x = (d\leftarrow x) \rightarrow d.
\]

An element $d \in P$ is \emph{cyclic} if for all $x,y \in P$,
\[
x\odot y \leq d\quad\text{if and only if}\quad  %
y\odot x \leq d.
\]
%\end{definition}
%\begin{definition}[Girard Quantale]
A \emph{Girard poset} is a residuated poset $(P, \le, \odot, \rightarrow, \leftarrow)$  that possesses a \emph{cyclic dualizing} element $d \in P$. 

In that case we write $x^{\xprime}=x\rightarrow d=d\leftarrow x$ and  ${}^{\xprime}$ is called a {\em linear negation} (algebraically, an inversion). From \cite[Proposition 2.2 and Proposition 2.3]{KrPaseka19}, we know that 
$P$ is a unital residuated poset with a unit $e=d^{\xprime}$, 
$x \rightarrow y=(x\odot y^{\xprime})^{\xprime}$ and 
$y \leftarrow x=(y^{\xprime}\odot x)^{\xprime}$.
\end{definition}

\subsection{Quantales}

While complete orthomodular lattices offer a static snapshot of a quantum system's possible states, primarily focused on testable properties, quantales provide a contrasting, dynamic viewpoint. They enable reasoning about the system's evolution and the structure of the quantum actions that drive these changes.

Mulvey \cite{Mulvey86} coined the term \auv{quantale} as a \auv{quantization} of the concept \auv{location} during an Oberwolfach Category Meeting in the early 1980s. Initially inspired by topology and functional analysis, quantales have developed into a robust foundation for modeling complex systems with non-classical logics across fields like computer science, abstract algebra, and quantum mechanics. A key breakthrough was the recognition that quantales provide the semantics for propositional linear logic, much like Boolean algebras do for classical propositional logic. Algebraically, quantales naturally arise as structures such as subgroups or lattices of ideals.

\begin{definition}\label{qsemiq}{\rm \cite{rosenthal1}
\begin{enumerate}
\item A {\em quantale\/} is a complete lattice $Q$ with an associative
binary multiplication satisfying
$$
x\odot\bigvee\limits_{i\in I}
x_i=\bigvee\limits_{i\in I}(x\odot
x_i)\ \ \hbox{and}\ \ \left(\bigvee\limits_{i\in I}x_i\right)\odot
x=\bigvee\limits_{i\in I}(x_i\odot x)
$$
for all $x,\,x_i\in Q,\,i\in I$ ($I$ is a set). Here 
$\bigvee\limits_{i\in I} x_i$ denotes the join of the set 
$\{x_i\colon {i\in I}\}$.
 The smallest element 
$0=\bigvee \emptyset$ of $Q$ is a {\em zero
element} of $Q$: $0 \odot s = 0 = s\odot 0$.

\item By an {\em involutive  quantale} will be meant
a  quantale $Q$ together with a semigroup
 involution $^{*}$ satisfying
$$
(\bigvee a_{i})^{*}=\bigvee a_{i}^{*}
$$
\noindent for all $a_{i}\in Q$. 
\end{enumerate}}
\end{definition}

Note that every quantale $Q$ is a residuated complete lattice such that 
$$
\begin{array}{r c l}
y \rightarrow z&=&%
\bigvee \{t \in Q\mid  t \odot y \le z\}\quad  \text{and}\\ 
z \leftarrow x&=&\bigvee \{t \in Q\mid  x \odot t \le z\}.
\end{array}
$$

Hence we can easily transfer notions from residuated posets 
to quantales. In particular, a {\em Girard quantale} is 
a complete Girard poset. Any complete Boolean algebra is a Girard quantale with dualizing element $0$.

Girard quantales play a significant role in the algebraic semantics of linear logic, especially in modeling the multiplicative and exponential fragments. 

\section{Girard posets and inversions}\label{Girposinv}

Inspired by the study in \cite{smarda97} on Girard quantales and inversion operations, this work analyzes the algebraic properties of Girard posets, with a particular focus on their relationship to inversions.

\begin{proposition}\label{cdis}
Let $(P, \leq, \odot, \rightarrow, \leftarrow)$ be a Girard poset equipped with an inversion operation ${}^{\xprime}$. Then $P$ is a residuated poset with inversion. Moreover, the cyclic dualizing element $d \in P$ is uniquely given by
\[
d = \bigvee_{x \in P} (x \odot x^{\xprime}).
\]
\end{proposition}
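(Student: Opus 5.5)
The plan is to read the inversion ${}^{\xprime}$ in the statement as the linear negation $x^{\xprime}=x\rightarrow d=d\leftarrow x$ attached to the cyclic dualizing element $d$ (as in Definition~\ref{def:Girpos}). I first check that this map really is an inversion in the sense of Definition~\ref{def:inversion}, which gives the first claim. I then show that $d$ is the \emph{greatest} element of the set $\{x\odot x^{\xprime}\mid x\in P\}$. This shows the join in the formula exists in the mere poset $P$ and equals $d$, and it also gives uniqueness.

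\textbf{Step 1 (the negation is well defined).} By adjointness, $t\le x\rightarrow d$ iff $t\odot x\le d$. By cyclicity this holds iff $x\odot t\le d$, which by adjointness again holds iff $t\le d\leftarrow x$. Since this is true for all $t$, we get $x\rightarrow d=d\leftarrow x$, so $x^{\xprime}$ is well defined.

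\textbf{Step 2 (it is an inversion).} Residua are antitone in the argument being residuated: if $x\le y$ and $t\odot y\le d$, then $t\odot x\le t\odot y\le d$. Hence $x\le y$ implies $y^{\xprime}\le x^{\xprime}$. The dualizing identity $(d\leftarrow x)\rightarrow d=x$ reads $x^{\xprime\xprime}=x$. Involutivity together with antitonicity yields the biconditional $x\le y\Leftrightarrow x^{\xprime}\ge y^{\xprime}$. So $(P,\le,\odot,\rightarrow,\leftarrow)$ is a residuated poset with inversion.

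\textbf{Step 3 (the formula for $d$).} For every $x$ we have $x^{\xprime}\le x\rightarrow d$, and adjointness gives $x^{\xprime}\odot x\le d$. Cyclicity then gives $x\odot x^{\xprime}\le d$, so $d$ is an upper bound of the set. For attainment I use the facts quoted from \cite{KrPaseka19}: $e=d^{\xprime}$ is a unit. Then
\[
e\odot e^{\xprime}=e^{\xprime}=d^{\xprime\xprime}=d ,
\]
so $d$ belongs to the set. A greatest element is the supremum, hence $d=\bigvee_{x\in P}(x\odot x^{\xprime})$.

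\textbf{Uniqueness.} The formula expresses $d$ purely in terms of $\odot$ and the inversion. Any cyclic dualizing element whose linear negation is the given inversion must therefore coincide with this join.

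\textbf{Main obstacle.} The delicate point is the attainment step in Step 3, since $P$ is only a poset and the join is not known a priori to exist. It depends on the unit property $e=d^{\xprime}$ from \cite[Proposition 2.2]{KrPaseka19}. If that citation were unavailable, I would verify $e\odot x=x$ directly: show $(e\odot x)\le y$ iff $x\le y$ by rewriting $e\odot x\le y$ as $e\le x\rightarrow y=(x\odot y^{\xprime})^{\xprime}$ and dualizing with the inversion.
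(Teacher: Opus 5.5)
Your proof is correct and follows essentially the same route as the paper: show $x\odot x^{\xprime}\le d$ for every $x$, and show that $d$ itself lies in the set $\{x\odot x^{\xprime}\mid x\in P\}$ because $d^{\xprime}=e$ is the unit, so $d$ is its greatest element and hence its join. The differences are cosmetic. The paper uses the witness $d\odot d^{\xprime}=d\odot e=d$ where you use $e\odot e^{\xprime}=e^{\xprime}=d$. You also verify directly from adjointness and cyclicity that the linear negation is a well-defined inversion, which the paper takes from the hypothesis and the earlier cited result.
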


\begin{proof}Since ${}^{\xprime}$ is an inversion on $P$, for any $x \in P$ we have
\[
x^{\xprime} = \bigvee \{ t \in P \mid t \odot x \leq d \} = \bigvee \{ t \in P \mid x \odot t \leq d \}.
\]
Hence,
\[
x \odot x^{\xprime} = \bigvee \{ x \odot t \mid x \odot t \leq d \} \leq d,
\]
and similarly,
\[
x^{\xprime} \odot x \leq d.
\]
Now, if $z \in P$ satisfies $z \geq x \odot x^{\xprime}$ for every $x \in P$, then in particular,
\[
z \geq d \odot d^{\xprime} = d,
\]
since $d^{\xprime}$ is the unit in $P$. Combining these, we conclude that
\[
d = \bigvee_{x \in P} (x \odot x^{\xprime})
\]
holds uniquely.
\end{proof}

The preceding discussion naturally leads to a stronger structural result linking Girard posets to Boolean algebras.

\begin{proposition}\label{charGirBooldual} Let $(P, \leq, \odot, \rightarrow, \leftarrow)$ be a Girard poset equipped with an inversion operation ${}^{\xprime}$.
Then $P$ is a Boolean algebra if and only if $P$ is an idempotent residuated lattice with a cyclic dualizing element $d = 0$.
\end{proposition}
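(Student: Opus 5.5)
The plan is to prove the two implications separately. The forward direction is a direct check using Remark~\ref{remint}. The converse reduces the residuated structure to the lattice operations and then reads off distributivity and complementation.

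\emph{($\Rightarrow$)} Let $P$ be a Boolean algebra. By Remark~\ref{remint}(4), $P$ is an integral commutative residuated lattice with $\odot=\wedge$ and $x\rightarrow y=x^{\perp}\vee y$. It is idempotent because $x\wedge x=x$. Take $d=0$. Then $x\rightarrow 0=x^{\perp}\vee 0=x^{\perp}$, so $(x\rightarrow 0)\rightarrow 0=x^{\perp\perp}=x$, which means $0$ is dualizing; the left residuum agrees since $\odot$ is commutative. Cyclicity is also immediate from commutativity. By Proposition~\ref{cdis} the cyclic dualizing element is unique, so $d=0$ is the dualizing element of this Girard structure, and the linear negation is the Boolean complement.

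\emph{($\Leftarrow$)} Assume $P$ is an idempotent residuated lattice with cyclic dualizing element $d=0$, where $0$ denotes the least element.
\begin{enumerate}
\item \textbf{Integrality.} By Remark~\ref{remint}(2), multiplication preserves existing joins in each argument. In particular it preserves the empty join, so $t\odot 0=0=0\odot t$ for all $t$. Hence $0\rightarrow 0=\bigvee\{t\mid t\odot 0\le 0\}$ is the top element $1$. Since the unit is $e=d^{\xprime}=0\rightarrow 0$, we get $e=1$, so $P$ is integral.
\item \textbf{Multiplication is meet.} By Remark~\ref{remint}(3), integrality gives $x\odot y\le x\wedge y$. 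Conversely, idempotency and monotonicity of $\odot$ give
\[
x\wedge y=(x\wedge y)\odot(x\wedge y)\le x\odot y .
\]
Therefore $\odot=\wedge$.
\item \textbf{Distributivity.} Now $x\wedge(-)$ is a left adjoint, with right adjoint $x\rightarrow(-)$. Hence it preserves all existing joins, so $x\wedge(y\vee z)=(x\wedge y)\vee(x\wedge z)$ and $P$ is a distributive lattice.
\item \textbf{Complementation.} Use the linear negation $x^{\xprime}=x\rightarrow 0$. By Proposition~\ref{cdis}, $x\wedge x^{\xprime}=x\odot x^{\xprime}\le d=0$. Since ${}^{\xprime}$ is an inversion, it is a dual order automorphism and so turns meets into joins. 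This gives
\[
x\vee x^{\xprime}=(x^{\xprime}\wedge x^{\xxprime})^{\xprime}=0^{\xprime}=1 .
\]
\end{enumerate}
Thus $P$ is a bounded, distributive, complemented lattice, i.e.\ a Boolean algebra, and the complement is the linear negation.

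The main obstacle is step~1, deriving $e=1$. It depends on interpreting $d=0$ as the bottom element and on the residuated multiplication annihilating the bottom. In a mere poset this rests on left adjoints preserving the empty join, and that is the point that needs care. Once integrality holds, steps~2--4 are routine.
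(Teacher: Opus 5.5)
Your proof is correct and follows the paper's route. The converse goes through $e=d^{\xprime}=1$, then $\odot=\wedge$ by the same idempotency sandwich, distributivity from $\odot$ preserving joins, and complementation from $x\odot x^{\xprime}\le d=0$; your De Morgan check of $x\vee x^{\xprime}=1$ makes explicit a step the paper skips. For ($\Rightarrow$) you verify directly that $0$ is cyclic dualizing, where the paper instead reads off $d=\bigvee_x(x\wedge x^{\xprime})=0$ from Proposition~\ref{cdis}.

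Two small remarks. The step you flag as the main obstacle is immediate, as the paper uses: an inversion sends the bottom to the top, so $e=d^{\xprime}=0^{\xprime}=1$. Also, Proposition~\ref{cdis} determines $d$ only relative to the given inversion, and cyclic dualizing elements need not be unique in general. In the Boolean case the cleaner reason $d=0$ is forced is that $(x\rightarrow d)\rightarrow d=x\vee d$.
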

\begin{proof}($\Rightarrow$) From Proposition \ref{cdis}, we have for every $x \in P$ that
\[
x \odot x^{\xprime} = x \wedge x^{\xprime} = 0,
\]
which implies that
\[
d = 0.
\]

\noindent{}($\Leftarrow$) For any $x, y \in P$, since $d^{\xprime} = 1$ is the unit in $P$, it follows that
\[
x \odot y \leq x \odot 1 = x \quad \text{and} \quad x \odot y \leq 1 \odot y = y.
\]
Moreover,
\[
x \wedge y = (x \wedge y) \odot (x \wedge y) \leq x \odot y.
\]
Therefore, we conclude that
\[
x \odot y = x \wedge y,
\]
which shows that $P$ is a distributive lattice. Furthermore, the elements $x$ and $x^{\xprime}$ are complementary since 
$d = 0$ implies 
\[
x \odot x^{\xprime} = x \wedge x^{\xprime} = 0.
\]
\end{proof}

The following theorem establishes the precise algebraic conditions under which a unital residuated poset is also a Girard poset, providing necessary and sufficient criteria based on the existence of an inversion operation.

\begin{theorem}\label{charth} Let $(P, \leq, \odot, \rightarrow, \leftarrow)$ be a unital residuated poset  with a unit $e$. The following statements  are equivalent:
\begin{enumerate}
    \item $P$ is a Girard poset.
    \item $P$ is a residuated poset with an inversion $^{\xprime}$ and $x^{\xprime} = x \rightarrow e^{\xprime} = e^{\xprime} \leftarrow x$ for every $x \in P$.
    \item $P$ is a residuated poset with an inversion $^{\xprime}$ and for every $x, y, t \in P$ it holds $t \odot x \le y^{\xprime} \Leftrightarrow y \odot t \le x^{\xprime}$.
\end{enumerate}
\end{theorem}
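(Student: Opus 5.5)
I will prove the cycle of implications (1)$\Rightarrow$(2)$\Rightarrow$(3)$\Rightarrow$(1). Throughout, I will rely on the facts recalled after Definition~\ref{def:Girpos}. If $d$ is a cyclic dualizing element, then $x^{\xprime}=x\rightarrow d=d\leftarrow x$ is well defined. Moreover, $d^{\xprime}$ is the unit, so $d^{\xprime}=e$ and hence $d=e^{\xprime}$.

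\textbf{(1)$\Rightarrow$(2).} Cyclicity gives $x\rightarrow d=d\leftarrow x$: for every $t$ we have $t\le x\rightarrow d$ iff $t\odot x\le d$ iff $x\odot t\le d$ iff $t\le d\leftarrow x$. Writing $x^{\xprime}$ for this common value, the dualizing identity reads $x^{\xprime\xprime}=x$. Antitonicity follows because $y\rightarrow d$ is antitone in $y$, being a right adjoint composed with the order reversal in the first argument. So ${}^{\xprime}$ is an inversion. Then $d=d^{\xprime\xprime}=e^{\xprime}$, which yields $x^{\xprime}=x\rightarrow e^{\xprime}=e^{\xprime}\leftarrow x$.

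\textbf{(2)$\Rightarrow$(3).} Put $d=e^{\xprime}$ and compute using only residuation and the two expressions for $x^{\xprime}$. On one side, $t\odot x\le y^{\xprime}=e^{\xprime}\leftarrow y$ holds iff $y\odot(t\odot x)\le e^{\xprime}$. On the other side, $y\odot t\le x^{\xprime}=x\rightarrow e^{\xprime}$ holds iff $(y\odot t)\odot x\le e^{\xprime}$. Associativity makes these two conditions identical.

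\textbf{(3)$\Rightarrow$(1).} This is the step needing care, since the given inversion is a priori unrelated to $\rightarrow$ and $\leftarrow$. I take $d=e^{\xprime}$ and first show $x^{\xprime}=x\rightarrow d$.
\begin{itemize}
\item Putting $y=e$ in (3) gives $t\odot x\le e^{\xprime}$ iff $t\le x^{\xprime}$. Hence $x^{\xprime}=x\rightarrow d$.
\item Putting $t=e$ gives $x\le y^{\xprime}$ iff $y\le x^{\xprime}$. This is consistent with the inversion axioms but adds nothing new.
\item Putting $x=e$ gives $t\le y^{\xprime}$ iff $y\odot t\le e^{\xprime}$. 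Hence $y^{\xprime}=d\leftarrow y$.
\end{itemize}
So ${}^{\xprime}$ coincides with both $x\rightarrow d$ and $d\leftarrow x$. Involutivity of ${}^{\xprime}$ then gives $d\leftarrow(x\rightarrow d)=x^{\xprime\xprime}=x$ and $(d\leftarrow x)\rightarrow d=x$, so $d$ is dualizing. For cyclicity, note that $x\odot y\le d$ iff $x\le y\rightarrow d=y^{\xprime}=d\leftarrow y$ iff $y\odot x\le d$.

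The main obstacle is the specialization step in (3)$\Rightarrow$(1): choosing the unit $e$ in the right slots so that the abstract inversion is forced to equal both residual negations. Once that identification is made, the Girard axioms follow formally.
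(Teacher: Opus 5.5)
Your proof is correct. It uses the same two core moves as the paper. The first is specializing (3) at the unit ($y=e$ and $x=e$), which forces $x^{\xprime}=x\rightarrow e^{\xprime}=e^{\xprime}\leftarrow x$. The second is reading off the dualizing identities from involutivity and cyclicity from the equality of the two residual negations. You arrange these differently: the paper runs the cycle $1\Rightarrow 3\Rightarrow 2\Rightarrow 1$, and your $(3)\Rightarrow(1)$ is exactly the paper's $3\Rightarrow 2$ followed by its $2\Rightarrow 1$.

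The real difference is how (3) is reached. The paper derives it from (1) by importing, from the facts recalled after Definition~\ref{def:Girpos}, the identities $x\rightarrow y=(x\odot y^{\xprime})^{\xprime}$ and $y\leftarrow x=(y^{\xprime}\odot x)^{\xprime}$. These give $x\rightarrow y^{\xprime}=(x\odot y)^{\xprime}=x^{\xprime}\leftarrow y$. You instead derive (3) from (2) by a purely formal residuation-plus-associativity computation, comparing $y\odot(t\odot x)\le e^{\xprime}$ with $(y\odot t)\odot x\le e^{\xprime}$. In $(1)\Rightarrow(2)$ you also check by hand that the linear negation is an involutive antitone map.

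So your argument is more self-contained, while the paper's is shorter because it outsources these facts. The one imported fact you still use, $d^{\xprime}=e$, can be dropped as well: $t\le e\rightarrow d$ iff $t\odot e\le d$ iff $t\le d$, so $e^{\xprime}=e\rightarrow d=d$ directly.
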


\begin{proof}
$1 \Rightarrow 3$: If $P$ is a Girard poset then the linear negation $^{\xprime}$ on $P$ is an inversion and 
$x \rightarrow y^{\xprime} =  x^{\xprime}\leftarrow y$ for all $x, y \in P$ (see the remark after Definition \ref{def:Girpos}). Therefore, for any $t, x, y \in P$, 
$$t \odot x \le y^{\xprime} \Leftrightarrow t \le x \rightarrow y^{\xprime} = x^{\xprime}\leftarrow y \Leftrightarrow y \odot t \le x^{\xprime}.$$

\noindent$3 \Rightarrow 2$: From the condition, taking $y = e$, we have  $t \odot x \leq e^{\xprime} \iff t \leq x^{\xprime}$. This implies $x \rightarrow e^{\xprime} \leq x^{\xprime}$. 

Similarly, using $x \odot t \leq e^{\xprime} \iff t \leq x^{\xprime}$, we obtain $ e^{\xprime} \leftarrow x \leq x^{\xprime}$. 

Moreover, since $x^{\xprime} \odot x \leq e^{\xprime}$ and $x \odot x^{\xprime} \leq e^{\xprime}$, it follows that $x^{\xprime} \leq x \rightarrow e^{\xprime}$ {and} $x^{\xprime} \leq e^{\xprime} \leftarrow x$. Hence, $$ x^{\xprime} = x \rightarrow e^{\xprime} = e^{\xprime} \leftarrow x. $$

\noindent$2 \Rightarrow 1$: Given that $x^{\xprime} = x \rightarrow e^{\xprime} = e^{\xprime} \leftarrow x$ and  that $x^{\xxprime} = x$ holds we conclude that $e^{\xprime}$ is a cyclic dualizing element and $P$ is a Girard poset with the linear negation $^{\xprime}$.
\end{proof}

\section{Quantales carrying ortholattice structure}
\label{quancarr}

Our primary motivation for this paper was to study a lattice structure—specifically, a residuated orthomodular lattice—that sits at the intersection of the many-valued logics and quantum logics frameworks.

We note that related research has already explored an ortholattice satisfying a weaker condition of adjointness, as documented in \cite{Fussner_2021}.

\begin{proposition}\label{compres1}
    Let $(P, \le, \odot, \rightarrow, \leftarrow)$ 
    be an integral residuated lattice that is complemented. 
    Then 
    \begin{enumerate}
\item $P$ is idempotent.
\item $\odot=\wedge$.
\item $P$ is a Boolean algebra.
\end{enumerate}
\end{proposition}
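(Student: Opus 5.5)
Throughout we use that $P$ is integral, so $1$ is both the unit and the top element, and by Remark~\ref{remint}(3) we have $x\odot y\le x\wedge y$ for all $x,y$. The key tool is that multiplication preserves existing joins in each argument (Remark~\ref{remint}(2)). Since $P$ is a lattice, the binary joins $x\vee x'$ exist, so this applies to them.

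For idempotency, fix $x$ and pick a complement $x'$ with $x\vee x'=1$ and $x\wedge x'=0$. Then
\[
x = x\odot 1 = x\odot(x\vee x') = (x\odot x)\vee(x\odot x').
\]
Since $x\odot x'\le x\wedge x'=0$, the second term vanishes. Hence $x=x\odot x$, which is item~1.

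For item~2 we already have $x\odot y\le x\wedge y$. Conversely, by monotonicity of $\odot$ (it follows from residuation) and idempotency,
\[
x\wedge y=(x\wedge y)\odot(x\wedge y)\le x\odot y.
\]
Hence $\odot=\wedge$, exactly as in the proof of Proposition~\ref{charGirBooldual}.

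For item~3 we need distributivity. By item~2, $x\wedge(-)=x\odot(-)$ preserves binary joins, so $x\wedge(y\vee z)=(x\wedge y)\vee(x\wedge z)$. Thus $P$ is a distributive lattice. Being complemented and distributive, it is a Boolean algebra by Remark~\ref{boolcompor}; complements become unique automatically.

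The only delicate point is justifying that $\odot$ distributes over binary joins in the lattice. I would verify this directly from adjointness. Clearly $x\odot(y\vee z)\ge (x\odot y)\vee(x\odot z)$ by monotonicity. For the reverse, let $w$ be an upper bound of $x\odot y$ and $x\odot z$. Then $y,z\le w\leftarrow x$, so $y\vee z\le w\leftarrow x$, which gives $x\odot(y\vee z)\le w$. This is the expected main step; everything else is routine.
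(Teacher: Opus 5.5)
Your proof is correct and follows the paper's argument step for step: idempotency from $x = x\odot(x\vee x') = (x\odot x)\vee(x\odot x')$ with $x\odot x'\le x\wedge x' = 0$, then the sandwich $x\odot y\le x\wedge y=(x\wedge y)\odot(x\wedge y)\le x\odot y$, then distributivity from join preservation plus Remark~\ref{boolcompor}. Your direct adjointness check that $x\odot(-)$ preserves binary joins just spells out what the paper takes from Remark~\ref{remint}.
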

\begin{proof} 1: Let $x\in P$. Since 
$P$ is integral we conclude
$x\odot x'\leq x\wedge x'=0$. We obtain 
$$
x=x\odot 1= x\odot (x\vee x')=x\odot x\vee x\odot x'=x\odot x.
$$
2: Let $x, y\in P$. We compute using integrality and idempotency of $P$:
$$
x\odot y\leq x\wedge y=(x\wedge y)\odot (x\wedge y)%
\leq x\odot y.
$$
3: Since $\odot=\wedge$ and $\odot$ distributes over 
finite joins we conclude that $P$ is a distributive lattice. 
Hence by Remark \ref{boolcompor} we obtain that 
$P$ is a Boolean algebra. 
\end{proof}

We immediately obtain the following theorem. 

\begin{theorem}
\label{thm:2}
A complemented lattice is an integral residuated lattice iff it is a Boolean algebra.
\end{theorem}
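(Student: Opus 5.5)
The theorem splits into two directions, and I will handle them separately using results already in the paper.

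For the forward direction, suppose a complemented lattice $P$ carries an integral residuated structure $(\odot,\rightarrow,\leftarrow)$. Then $P$ is an integral residuated lattice that is complemented. Proposition~\ref{compres1} applies directly: it gives $\odot=\wedge$ and shows that $P$ is a Boolean algebra. The only point to check is that the lattice order of the residuated lattice is the given lattice order. This holds by hypothesis, since the residuated structure lives on the same poset.

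For the converse, let $P$ be a Boolean algebra. By Remark~\ref{boolcompor}, $P$ is complemented, so it remains to exhibit an integral residuated structure on it.
\begin{itemize}
\item Take $\odot=\wedge$, which is associative and commutative, with unit $1$, the top element. This gives integrality.
\item Take $x\rightarrow y = x\leftarrow y = x'\vee y$, where $x'$ is the Boolean complement, as in Remark~\ref{remint}(4).
\item Check adjointness: we need $x\wedge y\le z$ if and only if $x\le y'\vee z$. This uses distributivity.
\begin{itemize}
\item If $x\wedge y\le z$, then $x=(x\wedge y)\vee(x\wedge y')\le z\vee y'$.
\item Conversely, if $x\le y'\vee z$, then $x\wedge y\le (y'\vee z)\wedge y=(y'\wedge y)\vee(z\wedge y)=z\wedge y\le z$.
\end{itemize}
\end{itemize}
Since $\wedge$ is commutative, the symmetric condition for $\leftarrow$ follows at once.

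No step here is a real obstacle. The substantive work, namely idempotency and then distributivity, was already done in Proposition~\ref{compres1}. The only point needing a little care is the converse adjointness check, which relies on distributivity and $y\wedge y'=0$.
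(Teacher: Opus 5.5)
Your proof is correct and follows the paper's route. The forward direction is exactly Proposition~\ref{compres1}, and the converse is the standard residuation $\odot=\wedge$, $y\rightarrow z=y'\vee z$ of Remark~\ref{remint}(4), for which you also supply the adjointness check that the paper takes for granted. One cosmetic slip: in the paper's convention $y\le z\leftarrow x$, the left residuum should read $z\leftarrow x=x'\vee z$, not $x\leftarrow y=x'\vee y$; that corrected form is what your appeal to commutativity actually gives.
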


\begin{remark}
    Theorem \ref{thm:2} has a precursor in Theorem 7.31 of \cite{WaDi39}, which states that a complemented lattice is 
    a commutative integral residuated lattice if and only if it is a Boolean algebra. Note that our proof proceeds independently. A related theorem for orthomodular lattices is also Theorem 2 in 
    \cite{TkTu11}.
\end{remark}

\begin{corollary}\label{intquaBool}
    A complemented lattice is an integral quantale iff it is a complete Boolean algebra.
\end{corollary}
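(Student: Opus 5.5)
The plan is to derive Corollary~\ref{intquaBool} from Theorem~\ref{thm:2}, checking the completeness and join-distributivity requirements separately in each direction.

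For the forward implication, let $Q$ be a complemented lattice that is an integral quantale. By the remark after Definition~\ref{qsemiq}, $Q$ is a residuated complete lattice. Integrality of the quantale means that the top element $1$ is the multiplicative unit, so $Q$ is an integral residuated lattice. It is complemented by hypothesis, so Proposition~\ref{compres1} (equivalently Theorem~\ref{thm:2}) shows that $Q$ is a Boolean algebra with $\odot=\wedge$. Since a quantale is by definition a complete lattice, $Q$ is a complete Boolean algebra. As a byproduct we learn that $\wedge$ distributes over arbitrary joins, which is the quantale law for $\odot=\wedge$.

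For the converse, let $B$ be a complete Boolean algebra and put $\odot=\wedge$. This multiplication is associative, and $1$ is a unit which is also the top element, so the structure is integral. The only real point is the quantale distributivity
\[
x\wedge\bigvee_{i\in I} x_i=\bigvee_{i\in I}(x\wedge x_i).
\]
I would obtain it from residuation rather than from finite distributivity. By Remark~\ref{remint}(4), $B$ is residuated with $x\rightarrow y=x^{\perp}\vee y$, that is, $x\wedge y\le z$ iff $y\le x^{\perp}\vee z$. Consequently $x\wedge(-)$ is a left adjoint and preserves all existing joins (Remark~\ref{remint}(2)). Since $B$ is complete, all joins exist, and the displayed identity follows; the right-hand version holds by commutativity. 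Hence $B$ is an integral (indeed commutative Girard, with $d=0$) quantale, and it is complemented.

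The step that needs the most care is the converse's infinite distributivity. Finite distributivity of a Boolean algebra does not by itself give the law for arbitrary joins, and it is the adjunction argument that settles it. Everything else is immediate from Theorem~\ref{thm:2}.
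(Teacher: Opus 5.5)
Your proposal is correct and follows essentially the paper's route. The paper treats the corollary as immediate from Theorem~\ref{thm:2}, with completeness supplied by the quantale structure, together with its earlier remark that every complete Boolean algebra is a (Girard) quantale under $\odot=\wedge$; your adjunction argument via $x\rightarrow y=x^{\perp}\vee y$ and Remark~\ref{remint} just makes explicit the infinite distributivity that the paper leaves implicit.
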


\begin{proposition}\label{compres}
    Let $(P, \le, \odot, \rightarrow, \leftarrow)$ 
    be an unital residuated lattice with unit $e$ that is orthomodular with orthocomplementation ${}^{\perp}$. 
    Then 
    \begin{enumerate}
\item $\downset e$ is a Boolean algebra with orthocomplementation ${}^{{\perp}_{e}}$ such that every two elements $x, y\in \downset e$ are compatible in $P$. 
\item $\downset e \cup \upset e^{\perp}$ is contained in a block $B$ of $P$.
\end{enumerate}
\end{proposition}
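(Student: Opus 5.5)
The plan is to reduce part 1 to Theorem~\ref{thm:2} (in the form of Proposition~\ref{compres1}) applied to the down-set $\downset e$, and then to derive part 2 from standard compatibility facts for orthomodular lattices (Kalmbach~\cite{OML}).

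\emph{Step 1: $\downset e$ is a complemented integral residuated lattice.} By Lemma~\ref{DownsetLem}, $\downset e$ is an orthomodular lattice with orthocomplement $u^{\perp_e}=e\wedge u^{\perp}$, and its finite meets and joins are those of $P$. In particular it is a complemented lattice with top element $e$.

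Next I check that $\downset e$ is closed under $\odot$. For $x,y\le e$, monotonicity of $\odot$ (Remark~\ref{remint}) gives $x\odot y\le x\odot e=x\le e$. So $\downset e$ is a monoid whose unit $e$ is its greatest element.

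I then define the residua on $\downset e$ by $x\rightarrow_e y=e\wedge(x\rightarrow y)$ and $y\leftarrow_e x=e\wedge(y\leftarrow x)$. For $z\le e$ we have
$z\odot x\le y \iff z\le x\rightarrow y \iff z\le e\wedge(x\rightarrow y)$, and symmetrically for the left residuum. Hence $\downset e$ is an integral residuated lattice that is complemented, and Proposition~\ref{compres1} shows that it is a Boolean algebra. Its complementation is $\perp_e$, because complements in a distributive lattice are unique.

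\emph{Step 2: compatibility in $P$.} Let $x,y\le e$. Distributivity of $\downset e$ gives $x=(x\wedge y)\vee(x\wedge y^{\perp_e})$. Since $x\le e$, we have $x\wedge y^{\perp_e}=x\wedge e\wedge y^{\perp}=x\wedge y^{\perp}$. Because joins in $\downset e$ agree with joins in $P$, this yields $x=(x\wedge y)\vee(x\wedge y^{\perp})$, i.e.\ $x\leftrightarrow y$ in $P$. This proves part 1.

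\emph{Step 3: part 2.} First observe that $\upset e^{\perp}=\{w^{\perp}\mid w\le e\}$, since ${}^{\perp}$ is an order-reversing involution. I will then use two standard OML facts:
\begin{enumerate}
\item compatibility is symmetric and is preserved by orthocomplementation, i.e.\ $x\leftrightarrow y$ implies $x\leftrightarrow y^{\perp}$;
\item every pairwise compatible subset of an orthomodular lattice is contained in a block. This follows because pairwise compatible sets generate Boolean subalgebras, combined with a Zorn's lemma argument.
\end{enumerate}

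By Step 2 and the first fact, any two elements of $\downset e\cup\upset e^{\perp}$ are compatible. There are three cases: $x,y\le e$; $x\le e$ together with $w^{\perp}$ for some $w\le e$; and $w^{\perp},v^{\perp}$ with $w,v\le e$. In each case compatibility follows from $x\leftrightarrow w$ and $w\leftrightarrow v$ by applying orthocomplementation. The second fact then places the whole set inside a single block $B$.

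The main obstacle is Step 3. It relies on the nontrivial theorem that pairwise compatible subsets lie in a common block. I would cite this from Kalmbach~\cite{OML} rather than reprove it, and would only verify directly the closure of compatibility under ${}^{\perp}$ and symmetry.
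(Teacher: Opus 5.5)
Your proof is correct and follows the paper's route. You apply Proposition~\ref{compres1} to $\downset e$, transfer compatibility to $P$ via $x\wedge y^{\perp_e}=x\wedge y^{\perp}$, and use the fact that pairwise compatible sets lie in a block; you also usefully make explicit the closure of $\downset e$ under $\odot$ and the restricted residua $e\wedge(x\rightarrow y)$, which the paper leaves implicit. The only difference is in part 2: the paper places $\downset e$ in a block $B$ and uses that $B$ is closed under ${}^{\perp}$ to get $\upset e^{\perp}=\{w^{\perp}\mid w\le e\}\subseteq B$, which makes your three-case compatibility check unnecessary.
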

\begin{proof} 1: Since $e\odot e=e=e\wedge e$ we conclude that 
$\downset e$ is an integral residuated lattice such that  multiplication, meet and join in $\downset e$ are restrictions 
from multiplication, meet  and join in $P$. 
From Lemma \ref{DownsetLem} we know that $\downset e$ 
is an orthomodular lattice. Hence $\downset e$ is a Boolean algebra with orthocomplementation ${}^{{\perp}_{e}}$. 

Assume that $x, y\in \downset e$. Then $x$ and $y$ are 
    compatible in $\downset e$. Hence 
    \begin{align*}
        x &= (x \wedge y) \vee (x \wedge y^{{\perp}_{e}})=
(x \wedge y) \vee (x \wedge y^\perp\wedge e)\\
&=%
(x \wedge y) \vee (x \wedge y^\perp).
    \end{align*}

\noindent{}2: Since every two elements $x, y\in \downset e$ are compatible in $P$ the set $\downset e$ is contained in some block $B$ of $P$. Since $B$ is closed under operation 
${}^{\perp}$ we conclude that  $\downset e \cup \upset e^{\perp}\subseteq B$.
\end{proof}

{Corollary \ref{intquaBool}} prompts a natural question: Are there examples of orthomodular unital quantales that are \textbf{not} Boolean algebras?

This question has a positive answer, which we establish using {Example \ref{examHilb}} and the result from {\cite[Corollary 13]{KrEg}}. Specifically, the complete lattice $C(M_{n}(\mathbb{C}))$, which consists of the closed subspaces of the Hilbert space $M_{n}(\mathbb{C})$ (for $n \geq 2$), serves as a counterexample. This lattice is simultaneously an {orthomodular lattice} and a {non-commutative involutive Girard quantale}.

This finding naturally extends to a further inquiry: Are there examples of {orthomodular, unital, and commutative quantales} that are not Boolean algebras?

\begin{theorem}\label{Rnexample}
    Let $n\in \Naturals$. Then $C(\Reals^{n})$ is an  
    {orthomodular lattice} and a {commutative Girard quantale} 
    such that ${}^{\perp}={}^{\xprime}$.
\end{theorem}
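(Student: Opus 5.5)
The plan is to equip $C(\Reals^{n})$ with a commutative multiplication coming from the coordinatewise product on $\Reals^{n}$, and then verify condition 3 of Theorem~\ref{charth} with the inversion ${}^{\xprime}:={}^{\perp}$. For $a=(a_1,\dots,a_n)$ and $b=(b_1,\dots,b_n)$, write $a\ast b=(a_1b_1,\dots,a_nb_n)$. This makes $\Reals^{n}$ a commutative, associative real algebra with unit $\mathbf 1=(1,\dots,1)$. For $A,B\in C(\Reals^{n})$ define
$$A\odot B=\lin{\{a\ast b\mid a\in A,\ b\in B\}}.$$
Since we are in finite dimension, every subspace is closed, so $C(\Reals^{n})$ is just the lattice of all subspaces. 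By Example~\ref{examHilb} it is a complete orthomodular lattice with ${}^{\perp}$ the orthogonal complement.

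The quantale axioms are routine consequences of bilinearity of $\ast$. Commutativity and associativity of $\odot$ follow from those of $\ast$. For associativity, $(A\odot B)\odot C$ and $A\odot(B\odot C)$ are both spanned by the triple products $a\ast b\ast c$. For join preservation, $\bigvee_i B_i=\lin{\bigcup_i B_i}$, so every element of $\bigvee_i B_i$ is a finite sum $\sum b_k$ with each $b_k$ in some $B_i$. Then $a\ast\sum b_k=\sum a\ast b_k$, which lies in $\bigvee_i(A\odot B_i)$; the reverse inclusion follows from monotonicity of $\odot$. For the unit take $e=\lin{\mathbf 1}$: we have $a\ast(\lambda\mathbf 1)=\lambda a$, hence $A\odot e=A$. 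So $C(\Reals^{n})$ is a unital commutative quantale, and in particular a unital residuated poset.

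The key step is condition 3 of Theorem~\ref{charth} for ${}^{\perp}$, which is an inversion because orthocomplementation is an involutive order-reversing map. Introduce the symmetric trilinear form
$$\tau(a,b,c)=\langle a\ast b,c\rangle=\sum_{i=1}^n a_ib_ic_i .$$
Since $A\odot B$ is spanned by the products $a\ast b$, we have $A\odot B\le C^{\perp}$ if and only if $\tau$ vanishes identically on $A\times B\times C$. Because $\tau$ is invariant under every permutation of its arguments, $T\odot X\le Y^{\perp}$ holds iff $\tau$ vanishes on $T\times X\times Y$. This in turn holds iff $\tau$ vanishes on $Y\times T\times X$, i.e. iff $Y\odot T\le X^{\perp}$. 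This is exactly condition 3.

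Theorem~\ref{charth} ($3\Rightarrow 2\Rightarrow 1$) then shows that $C(\Reals^{n})$ is a Girard poset, hence a Girard quantale since it is complete. It also gives $X^{\perp}=X\rightarrow e^{\perp}$, so the cyclic dualizing element is $d=e^{\perp}=\mathbf 1^{\perp}$ and the linear negation coincides with ${}^{\perp}$. The main thing to get right is the choice of product: it has to make the expression $\langle a\ast b,c\rangle$ fully symmetric. The coordinatewise product does this, whereas matrix-type products would only be cyclic and would lose commutativity. Once that choice is made, the remaining verifications are routine.
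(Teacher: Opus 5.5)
Your proposal is correct and takes essentially the same route as the paper: the coordinatewise product on $\mathbb{R}^n$, the unit $e=\lin{\{(1,\dots,1)\}}$, the dualizing element $e^{\perp}$, and condition 3 of Theorem~\ref{charth} verified via the full symmetry of $\sum_i a_ib_ic_i$. Your version is slightly more careful than the paper's. You check associativity and join-preservation explicitly, and your permutation argument gives condition 3 in exactly its stated form, whereas the paper derives $U\odot T\subseteq S^{\perp}$ and tacitly relies on commutativity to match it.
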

\begin{proof} From {Example \ref{examHilb}}, we conclude that 
    $C(\Reals^{n})$ is an  {orthomodular lattice} with inversion  ${}^{\perp}$. Recall that 
    $$
    \begin{array}{r@{\,\,}l}
U^{\perp}=\{(a_1, \dots, a_n)\in \mathbb{R}^{n}\mid& \sum_{i=1}^{n} a_i\cdot u_i=0\\ 
&\text{for every}\ 
(u_1, \dots, u_n)\in U\}.
\end{array}
    $$

Let us prove that $C(\mathbb{R}^{n})$ is a {commutative Girard quantale}.
Recall that $\mathbb{R}^{n}$ is a ring with addition and multiplication defined {pointwise}. For every $s\in \mathbb{R}^{n}$ we assume that $s=(s_1, \dots, s_n)$.

Define the multiplication \( \odot \) of subspaces \( S, T \subseteq \mathbb{R}^{n} \) as the linear span of products:
\[
S \odot T = \lin{\{ s\cdot t \mid s \in S, t \in T \}}
\]

\begin{itemize}
  \item This is well-defined, as the span of products of elements from subspaces is again a subspace.
  \item It is associative and commutative:
  \begin{align*}
   (S \odot T) \odot U &= S \odot (T \odot U),\\
    S \odot T &= T \odot S,
  \end{align*}
 due to associativity and commutativity of ring multiplication.
  \item It distributes over joins, i.e., for family \( \{S_i\} \) of subspaces:
  \[
  \left( \bigvee_i S_i \right) \odot T = \bigvee_i (S_i \odot T),
  \]
  similarly for the right side.
  \item Select the unit $e$ as a subspace 
  $\lin{\{(1, \dots, 1)\}}$. Evidently, 
  $$
e\odot S=S=S\odot e
  $$
  for every subspace  \( S \subseteq \mathbb{R}^{n} \).
  \item Select the dualizing element $d$ as a subspace 
  $$e^{\perp}=%\lin{\{(1, \dots, 1)\}}^{\perp}=%
  \{(a_1, \dots, a_n)\in \mathbb{R}^{n}\mid \sum_{i=1}^{n} a_i=0\}.$$
\end{itemize}

Let \( S, T, U \subseteq \mathbb{R}^{n} \) are subspaces of 
$\mathbb{R}^{n}$. We compute:
\begin{align*}
    S\odot  T\subseteq U^{\perp} &\Leftrightarrow 
    \sum_{i}^{n} s_i\cdot t_i\cdot u_i=0\ \text{for all}\ 
    s\in S,t\in  T, u\in U\\
    &\Leftrightarrow 
    \sum_{i}^{n} u_i\cdot t_i\cdot s_i=0\ \text{for all}\ 
    s\in S,t\in  T, u\in U\\
    &\Leftrightarrow U\odot  T\subseteq S^{\perp}.
\end{align*}

From Theorem \ref{charth} we conclude that these properties make \( C(\mathbb{R}^{n}) \) a 
commutative Girard quantale.
\end{proof}

In what follows we present 
natural examples of orthomodular lattices based on infinite-dimensional Hilbert spaces that are 
non-commutative involutive Girard quantales.

\begin{example}
The spectrum of a $C^*$-algebra $A$, denoted $\mathrm{Max}\, A$, is the sup-lattice of all linear subspaces of $A$ which are closed with respect to the norm topology. It is an involutive unital quantale with respect to the multiplication and involution 
\[
a\cdot b = \overline{\{A\circ B \mid A \in a,\, B \in b\}}%
\ \text{and}\ a^{*}={\{A^{*} \mid A \in a,\}},
\]
where the overline denotes closure.
    The spectrum quantale $\mathrm{Max}\, \mathcal{B}(H)$ 
    of the $C^*$-algebra $\mathcal{B}(H)$ of bounded linear 
    operators on an infinite-dimensional Hilbert space $H$ is no longer Girard.

    The Frobenius scalar product ($\langle A\mid B\rangle = \mathrm{Tr}(A^*\circ B)$ where $\mathrm{Tr}$ means the trace) still establishes a vital sense of orthogonality—a property that holds true for individual operators as well as for the corresponding subspaces (see \cite{KrEg}).

Operator $A$ is said to be \emph{of finite (Schatten) $p$-norm} if $\mathrm{Tr}\,|A|^p<\infty$.
In particular, $A$ is \emph{trace class} if it is of finite $1$-norm, or \emph{Hilbert--Schmidt} if it is of finite $2$-norm.
An ordinary bounded operator can be regarded as having finite $\infty$-norm.
Given two operators $A,B$, the former of finite $p$-norm and the latter of finite $q$-norm, their product is of finite $r$-norm where
\begin{equation}\label{pq}
\frac{1}{r}=\min\left\{ 1,\frac{1}{p}+\frac{1}{q}\right\}.
\end{equation}
Thus the operators $A,B$ could be considered \emph{orthogonal} whenever $p^{-1}+q^{-1}\geq 1$ and $\mathrm{Tr}(A^*\circ B)=0$.
Notice that the antitone mapping $f\colon [1,\infty]\to [0,1]$, $f(p)=1-\frac{1}{p}$,
together with \eqref{pq} yields the \emph{\L ukasiewicz product} $\odot_{\text{\emph{\L}}}$ on $[0,1]$, namely
\[
a\odot_{\text{\emph{\L}}} b=\max\{ 0,a+b-1\}.
\]
The orthogonality relation can be naturally extended to \emph{closed subspaces of Schatten classes}.

More precisely, for every $p\in [1,\infty]$ we consider a \emph{$p$-norm spectrum} $\mathrm{Max}_p\mathcal{C}_p(H)$ as a quantale of all $p$-norm closed subspaces of the Schatten class $\mathcal{C}_p(H)$ (consisting of finite $p$-norm operators).
If $p<q$, then every subspace of $\mathcal{C}_p(H)$ can be embedded into $\mathcal{C}_q(H)$ and then closed in the $q$-norm, and this yields a quantale homomorphism $\mathrm{Max}_p\mathcal{C}_p(H)\to\mathrm{Max}_q\mathcal{C}_q(H)$.

For $a\in\mathrm{Max}_p\mathcal{C}_p(H)$ and $b\in\mathrm{Max}_q\mathcal{C}_q(H)$, we find that $a\cdot b$ is in $\mathrm{Max}_r\mathcal{C}_r(H)$ for $r$ satisfying \eqref{pq}.
Thus the interval scheme of spectra
\[
\mathrm{Max}_1\mathcal{C}_1(H)\to\cdots\to\mathrm{Max}_p\mathcal{C}_p(H)\to\cdots\to\mathrm{Max}_\infty\mathcal{C}_\infty(H),
\]
where $\mathrm{Max}_\infty\mathcal{C}_\infty(H)=\mathrm{Max}\,\mathcal{B}(H)$, 
is organized in the same way as the \L ukasiewicz logic on $[0,1]$.

The dualizing element here is the subspace of $\mathcal{C}_1(H)$ consisting of \emph{trace-zero} operators
\[
d=\{ A\mid \mathrm{Tr}\,|A|<\infty,\, \mathrm{Tr}\,A=0\}.
\]

A complement of $a\in\mathrm{Max}_p\mathcal{C}_p(H)$ is in
$\mathrm{Max}_{p'}\mathcal{C}_{p'}(H)$ for $p'=(1-p^{-1})^{-1}$ and is given
by
$$a^\perp=\{ B\mid \text{Tr}\,(A^*\circ B)=0\text{ for all }A\in a\}.$$

The scheme can be made into a single involutive Girard quantale $G$ consisting of
mappings assigning to each $p\in [1,\infty]$ an element $a_p$ of
$\mathrm{Max}_p\mathcal{C}_p(H)$ provided that $a_p\subseteq a_q$ whenever
$p\leq q$.
The join and involution are given componentwise, the product by rule
$$(a\odot b)_r=\bigvee_{1/r\leq 1/p+1/q}a_p \cdot b_q,$$
and the orthocomplementation by rule
$(a^\perp)_p=(a_{p'})^\perp$.

Finally, let $a\in\mathrm{Max}_p\mathcal{C}_p(H)$ and assume that the norm
closed subspace $a\oplus a^\perp$ is a proper subspace of
$\mathcal{B}(H)$.
Then there is a non-zero (trace class) complement of $a\oplus a^\perp$
and its elements are perpendicular both to $a$ and $a^\perp$.
But an operator $A$ from it would satisfy $\text{Tr}\,(A^*\circ A)=0$ which is
a contradiction.
Hence, $a\oplus a^\perp=H$ and every operator in $\mathcal{B}(H)$
(uniquely) decomposes to $a$ and $a^\perp$ like in finitely dimensional
spaces.
If
$a_p\in\mathrm{Max}_p\mathcal{C}_p(H),a_q\in\mathrm{Max}_q\mathcal{C}_p(H)$
are such that $p\leq q,a_p\subset a_q$, then there is an operator $A\in
a_q\smallsetminus a_p$ with a non-zero projection on $a_p^\perp$.
We infer that $a_p^\perp\cap a_q\neq 0$ and $\perp$ provides an
orthomodular lattice structure on $G$.

\end{example}

\section{Conclusions}\label{Conclusions}

This paper has explored the fertile intersection between two major algebraic traditions in non-classical logic: residuated structures from many-valued logic and orthomodular lattices from quantum logic. Our investigation reveals both limitations and possibilities for unifying these frameworks.
On the restrictive side, we established that complemented lattices admit an integral residuated structure if and only if they are Boolean algebras (Theorem~\ref{thm:2}).

This result, which strengthens previous work by removing the commutativity assumption, shows that Boolean algebras represent the unique meeting point of complementation and integral residuation. This finding delineates a clear boundary: any attempt to construct richer non-Boolean structures must venture beyond the integral case.

Our main constructive contribution demonstrates that this boundary can indeed be crossed in the unital setting. Through Theorem~\ref{Rnexample}, we showed that the lattice $C(\mathbb{R}^n)$ 
of closed subspaces of real coordinate space simultaneously carries the structure of an orthomodular lattice and a commutative Girard quantale, with the orthocomplement coinciding with the linear negation. This provides a concrete family of examples where quantum-logical and many-valued logical structures coexist harmoniously in a non-Boolean framework.

The characterization theorem (Theorem~\ref{charth}) establishes a fundamental connection between Girard posets and residuated posets with inversions, offering three equivalent conditions that illuminate the relationship between linear negation, cyclic dualizing elements, and adjointness properties. This characterization provides a powerful tool for recognizing and constructing Girard posets in various mathematical contexts.

\subsection*{Future Directions}
Several natural questions emerge from this work:

\begin{enumerate}
\item \textbf{Complex coordinate spaces:} While we established that $C(\mathbb{R}^n)$
is both orthomodular and a commutative Girard quantale, the situation for $C(\mathbb{C}^n)$
 remains open. Does the lattice of closed subspaces of complex coordinate space admit a compatible quantale structure? If not, what structural obstacles prevent such a construction?

\item \textbf{Logical completeness:} What logic corresponds to orthomodular Girard quantales? Can we develop a sound and complete proof system that captures reasoning in these structures, potentially combining aspects of quantum logic and linear logic?

\end{enumerate}

The framework developed in this paper opens new avenues for algebraic investigations at the boundary of quantum and many-valued logics. By identifying both impossibility results and constructive examples, we have mapped some of the terrain where these logical traditions intersect. The explicit construction of orthomodular Girard quantales demonstrates that non-Boolean structures can indeed accommodate both quantum complementation and residuation principles, suggesting rich possibilities for further exploration in algebra, logic, and their applications to quantum theory.

\section*{Acknowledgment}

The first author acknowledges the support of project 23-09731L by the Czech Science Foundation (GA\v CR), entitled 
``Representations of algebraic semantics for substructural logics''.
The research of the second author and third author 
was funded in part by the Austrian Science Fund (FWF) 10.55776/PIN5424624 and the Czech Science Foundation
(GACR) 25-20013L.

\end{document}